\newtheorem{theorem}{Theorem}[section]
\newtheorem{lemma}[theorem]{Lemma}
\newtheorem{construction}[theorem]{Construction}
\numberwithin{equation}{section}
\def\eps{{\varepsilon}}
\def\Dr{\Delta_{r-1}}
\def\dr{\delta_{r-1}}
\def\ex{\text{ex}}
\def\pr{\pi_{r-1}}
\def\bF{\text{Fano}}
\title{Codegree Tur\'an density of complete $r$-uniform hypergraphs}
\thanks{
The first author is supported by EPSRC grant EP/P002420/1. 
The second author is partially supported by NSF grants DMS-1400073 and 1700622.}
\author{Allan Lo}
\address{School of Mathematics, University of Birmingham, Birmingham, B15 2TT, UK}
\email{s.a.lo@bham.ac.uk}
\author{Yi Zhao}
\address
{Department of Mathematics and Statistics, Georgia State University, Atlanta, GA 30303}
\email{yzhao6@gsu.edu}
\date{\today}
\begin{document}
\begin{abstract}
Let $r\ge 3$. Given an $r$-graph $H$, the minimum codegree $\dr(H)$ is the largest integer $t$ such that every $(r-1)$-subset of $V(H)$ is contained in at least $t$ edges of $H$. Given an $r$-graph $F$, the codegree  Tur\'an density $\gamma(F)$ is the smallest $\gamma >0$ such that every $r$-graph on $n$ vertices with $\dr(H)\ge (\gamma + o(1))n$ contains $F$ as a subhypergraph. Using results on the independence number of hypergraphs, we show that there are constants $c_1, c_2>0$ depending only on $r$ such that 
\[
1 - c_2 \frac{\ln t}{t^{r-1}} \le \gamma(K_t^r) \le 1 - c_1 \frac{\ln t}{t^{r-1}},
\]
where $K_t^r$ is the complete $r$-graph on $t$ vertices.
This gives the best general bounds for $\gamma(K_t^r)$. 
\end{abstract}

\maketitle

\section{introduction}
An $r$-uniform hypergraph ($r$-graph) $H$ consists of a vertex set $V(H)$ and an edge set $E(H)$, which is a family of $r$-subsets of $V(H)$. A fundamental problem in extremal combinatorics is to determine the Tur\'an number $\ex(n, F)$, which is the largest number of edges in an $r$-graph on $n$ vertices not containing a given $r$-graph $F$ as a subhypergraph (namely, $F$-free). When $r\ge3$, we only know $\ex(n, F)$, or its asymptotics $\pi(F):= \lim_{n\to \infty} \ex(n, F)/\binom nr$ for very few $F$. Let $K_t^r$ denote the complete $r$-graph on $t$ vertices. Determining $\pi(K_t^r)$ for any $t>r\ge3$ is a well known open problem, in particular, Tur\'an~\cite{MR0018405} conjectured in 1941 that $\pi(K_4^3)= 5/9$. The best (general) bounds for $\pi(K_t^r)$ are due to Sidorenko~\cite{MR635252} and de Caen~\cite{MR734038}
\begin{equation}\label{eq:1}
   1- \left(\frac{r-1}{t-1}\right)^{r-1}\leq \pi(K^r_{t})\leq 1-\frac{1}{{t-1\choose r-1}}.
\end{equation}
For more Tur\'an-type results on hypergraphs, see surveys \cite{MR1161467,MR2866732}.

A natural variation on the Tur\'an problem is to ask how large the minimum $\ell$-degree can be in an $F$-free $r$-graph. Given an $r$-graph $H$, the \emph{degree} $\deg(S)$ of a set $S\subset V(H)$ is the number of the edges that contain $S$. Given $1\le \ell < r$, the minimum $\ell$-degree $\delta_{\ell}(H)$ is the minimum $\deg(S)$ over all $S\subset V(H)$ of size $\ell$. Mubayi and Zhao \cite{MR2337241} introduced the \emph{codegree Tur\'an number} $\ex_{r-1}(n, F)$, which is the largest $\dr(H)$ among all $F$-free $r$-graphs on $n$ vertices, and \emph{codegree (Tur\'an) density} $\pi_{r-1}(F) := \lim_{n\to \infty} \ex_{r-1}(n, F)/n$ (it was shown \cite{MR2337241} that this limit exists). The corresponding $\ell$-degree Tur\'an number $\ex_{\ell}(n, F)$ and density $\pi_{\ell}(F)$ were defined similarly and studied by Lo and Markstr\"om \cite{MR3248027}.\footnote{A simple averaging argument shows that $\pi_1(F) = \pi(F)$ for every $F$.}

Most codegree Tur\'an problems do not seem easier than the original Tur\'an problems. We only know the codegree densities of the following $r$-graphs. Let $\bF$ denote the Fano plane (a 3-graph on seven vertices and seven edges). Mubayi \cite{MR2171370} showed that $\pi_2(\bF)= 1/2$ and Keevash \cite{MR2478542} later showed that $\ex_2(n, \bF)= \lfloor n/2 \rfloor$ for sufficiently large $n$ (DeBiasio and Jiang~\cite{MR3131883} gave another proof). Keevash and Zhao \cite{MR2354709} studied the codegree density for other projective geometries and constructed a family of 3-graphs whose codegree densities are $1- 1/t$ for all integers $t\ge 1$. Falgas-Ravry, Marchant, Pikhurko, and Vaughan \cite{MR3384831} determined $\ex_2(F_{3,2})$ for sufficiently large $n$, where $F_{3, 2}$ is the 3-graph on $\{1, 2, 3, 4, 5\}$ with edges $123, 124, 125, 345$. Falgas-Ravry, Pikhurko, Vaughan and Volec~\cite{FPVV} also proved that $\pi_2(K_4^{3-})= 1/4$, where $K_4^{3-}$ is the (unique) 3-graphs on four vertices with three edges.  

In this note we obtain asymptotically matching bounds for $\pr(K_t^r)$ for any fixed $r\ge 3$ and sufficiently large $t$. Since its value is close to one, it is more convenient to write $\pr(K_t^r)$ in the complementary form. Given an $r$-graph $H$ and $\ell<r$, let $\Delta_{\ell}(H)$ denote the \emph{maximum $\ell$-degree} of $H$ and $\alpha(H)$ denote the \emph{independence number} (the largest size of a set of vertices containing no edge) of $H$.  Define 
\[
T_{\ell} (n,t,r) = \min \left\{ \Delta_{\ell}(H): H \text{ is an $r$-graph on $n$ vertices with } \alpha (H) <t \right\}
\]
and $\tau_{\ell}(t,r) = \lim_{n \rightarrow \infty} T_{\ell}(n,t,r) / \binom{n- \ell }{r- \ell}$. 
It is clear that $T_{\ell}(n, t, r) = \binom{n-\ell}{r - \ell} - \ex_{\ell}(n, K_t^r)$ and $\tau_{\ell}(t,r) =1 - \pi_{\ell}(K_t^r)$.
Falgas-Ravry~\cite{MR3158267} showed that $\tau_2(t, 3) \le 1/(t-2)$ for $t\ge 4$ while Lo and Markstr\"om~\cite{MR3248027} showed that $\tau_{r-1}(t, r) \le 1/(t-r+1)$ for $t>r\ge 3$. Recently Sidorenko~\cite{Sid17} used zero-sum-free sequences in $\mathbb{Z}_3^d$ to get $\tau_2(t, 3) \le O(\frac{1}{t\ln t})$.

We show that $\tau_{r-1}(t, r) = \Theta(\ln t/t^{r-1})$ as $t\to \infty$.
\begin{theorem}\label{thm:main}
For all $r\ge 3$, there exist $c_1, c_2>0$ such that 
\[
c_1  \ln t /  t^{r-1} \le \tau_{r-1} (t,r)  \le c_2 \ln t /  t^{r-1}.
\]
\end{theorem}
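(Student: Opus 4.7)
My plan is to prove the two bounds separately: the lower bound follows by applying well-known independence-number bounds for $r$-graphs to any $H$ satisfying the codegree hypothesis, while the upper bound requires a structured probabilistic construction.

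For the lower bound, I would fix a large $n$ and take any $r$-graph $H$ on $n$ vertices with $\Delta_{r-1}(H) \le c_1 n \ln t / t^{r-1}$, aiming to show $\alpha(H) \ge t$. Averaging over $(r-1)$-supersets of a fixed $\ell$-set gives $\Delta_{\ell}(H) = O(n^{r-\ell} \ln t / t^{r-1})$ for every $1 \le \ell \le r-1$. These bounds put $H$ into the regime of Ajtai--Koml\'{o}s--Pintz--Spencer--Szemer\'{e}di-type theorems (as refined by Duke--Lefmann--R\"{o}dl, Kostochka--R\"{o}dl, Frieze--Mubayi, \emph{etc.}): if an $r$-graph has maximum vertex-degree $\Delta$ and higher codegrees satisfying $\Delta_{\ell} = O(\Delta^{(r-\ell)/(r-1)})$, then $\alpha(H) = \Omega_{r}\bigl( n (\ln \Delta)^{1/(r-1)} / \Delta^{1/(r-1)} \bigr)$. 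Substituting $\Delta = O(n^{r-1} \ln t / t^{r-1})$ and $\ln \Delta = \Theta(\ln n)$ gives $\alpha(H) = \Omega\bigl( t (\ln n / \ln t)^{1/(r-1)} \bigr) \ge t$ for all sufficiently large $n$, provided $c_1$ is chosen small enough relative to the implicit constant in the independence bound.

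For the upper bound, I need, for arbitrarily large $n$, an $r$-graph $H$ on $n$ vertices with $\alpha(H) < t$ and $\Delta_{r-1}(H) \le c_2 n \ln t / t^{r-1}$. A natural first attempt is a random $r$-graph $G^{r}(n, p)$ with $p = c_2 \ln t / t^{r-1}$, for which a Chernoff bound gives $\Delta_{r-1} = (1 + o(1)) p n$. However, the expected number of independent $t$-sets is only bounded by $(n / t^{c_2/r!})^t$, which is $o(1)$ only when $n$ is polynomial in $t$. Hence a pure random argument yields only a seed $H_0$ on $n_0 = \mathrm{poly}(t)$ vertices with $\alpha(H_0) < t$ and the desired codegree bound. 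To go from $H_0$ to arbitrary $n$ I would use a structured extension, for instance a balanced blow-up of $H_0$ supplemented by a sparse family of cross-copy edges designed to destroy any $t$-set that picks essentially one vertex from each copy.

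The main obstacle is the extension step. A plain blow-up of $H_0$ multiplies the independence number by the blow-up factor, so extra edges are needed to break every $t$-set lying ``diagonally'' across several copies. The delicate point is to choose these auxiliary edges so that every such diagonal $t$-set is covered while no $(r-1)$-subset has more than $O(n \ln t / t^{r-1})$ additional edges through it. By contrast, the lower bound is essentially a bookkeeping exercise on top of the hypergraph independence-number machinery, its main content being the verification that bounding $\Delta_{r-1}$ forces all the intermediate $\Delta_{\ell}$ into the balanced regime required by the AKPSS-type theorems.
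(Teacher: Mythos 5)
Your proposal reproduces the two-step outline of the paper (independence-number theorems for the lower bound, blow-up of a sparse seed for the upper bound), but both halves currently contain gaps at exactly the points the paper is careful about.

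\textbf{Lower bound.} The independence-number theorem you invoke cannot have the form $\alpha(H)=\Omega_r\bigl(n(\ln\Delta)^{1/(r-1)}/\Delta^{1/(r-1)}\bigr)$ with $\Delta$ the vertex degree and no range condition: in our setting $\Delta_{r-1}(H)\le\delta n$ with $\delta$ a constant, so $\Delta=\Theta(\delta n^{r-1})$ and your formula yields $\alpha(H)=\Omega\bigl((\ln n/\delta)^{1/(r-1)}\bigr)\to\infty$ as $n\to\infty$ for fixed $\delta$, which contradicts the upper-bound construction (which produces, for every $n$, hypergraphs with $\Delta_{r-1}\le\delta n$ and $\alpha<t$). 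The correct logarithmic gain in results such as \cite[Theorem~1]{MR3158630} is $\ln(n/d)$ with $d=\Delta_{r-1}$, which is bounded by $\ln(1/\delta)$ here, and moreover those results require $\Delta_{r-1}(H)$ to be much smaller than $n$ (in the paper, $d<n/(\ln n)^{3(r-1)^2}$). This range restriction is explicitly flagged in the paper as the obstacle, and it is resolved by a genuinely extra idea: Lemma~\ref{lma:MubayiZhao} (Mubayi--Zhao) lets you pass from $H$ to an induced subhypergraph $H'$ on a bounded number of vertices $m\approx\lceil 1/\delta^4\rceil$ with $\Delta_{r-1}(H')\le 2\delta m$, and since $m$ depends only on $\delta$, the bound $2\delta m<m/(\ln m)^{3(r-1)^2}$ is now a condition on $\delta$ alone. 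Applying Theorem~\ref{thm:KMV} to $H'$ then gives $\alpha(H)\ge\alpha(H')\ge c_0(\ln(1/\delta)/\delta)^{1/(r-1)}$, which is Theorem~\ref{thm:main2}, and substituting $\delta=c_1\ln t/t^{r-1}$ gives $\alpha(H)\ge t$. Your ``averaging to control the intermediate $\Delta_\ell$'' step is not where the work is.

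\textbf{Upper bound.} You correctly identify that a plain blow-up multiplies the independence number and that extra edges are needed, but you leave the crucial structural choice open. The paper's construction resolves it with a specific seed: a partial Steiner $(m,r,r-1)$-system $S$ of R\"odl--\v{S}i\v{n}ajov\'a with $\alpha(S)\le a_2(m\ln m)^{1/(r-1)}$. Blow up $S$ to parts $V_1,\dots,V_m$ of size $d$, take all crossing $r$-sets along edges of $S$ \emph{and all $r$-sets inside each part}. The Steiner property (each $(r-1)$-subset of $[m]$ is in at most one edge of $S$) gives $\Delta_{r-1}(H)=d$ exactly, and the internal cliques force any independent set to pick at most $r-1$ vertices per part, so $\alpha(H)=(r-1)\alpha(S)$. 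Choosing $m=\lceil t^{r-1}/(c_2\ln t)\rceil$ and $d=n/m$ then makes both $\Delta_{r-1}(H)\le c_2 n\ln t/t^{r-1}$ and $\alpha(H)<t$. The point you would need to discover, and did not, is that the seed must be a partial Steiner system in order to keep the codegrees of the blow-up under control; a random seed does not have this property, which is precisely the ``delicate point'' you flag but do not close.
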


In fact, the upper bound immediately follows from a construction of Kostochka, Mubayi and Verstra\"ete~\cite{MR3158630} (see Construction~\ref{con:KMV1}).
The lower bound can be deduced from either the main result of \cite{MR3158630} or a result of Duke, Lefmann, and R\"odl \cite{MR1370956}. However, since both results require $\Delta_{r-1}(H)= o(n)$, we need to extend them slightly by allowing $\Delta_{r-1}(H)$ to be a linear function of $n$ (see Theorem~\ref{thm:main2}).

We prove Theorem~\ref{thm:main} in the next section and give concluding remarks and open questions in the last section. 

\section{Proof of Theorem~\ref{thm:main}}
A \emph{partial Steiner $(n, r, \ell)$-system} is an $r$-graph on $n$ vertices in which every set of $\ell$ vertices is contained in at most one edge. 
R\"odl and \u{S}inajov\'a~\cite{MR1248185} showed that there exists $a_2>0$ such that for every $m$, there is a partial Steiner $(m, r, r-1)$-system $S$ with $\alpha (S) \le a_2 (m \ln m)^{1/(r-1)}$.
Kostochka, Mubayi and Verstra\"ete~\cite[Section 3.1]{MR3158630} used the blowup of this Steiner system to obtain the following construction. A similar construction (but not using the result of \cite{MR1248185}) was given in \cite{MR3158267}.

\begin{construction}\cite{MR3158630} \label{con:KMV1}
Let $S$ be the partial Steiner $(m, r, r-1)$-system given by R\"odl and \u{S}inajov\'a.
Let $V$ be a union of disjoint sets $V_1, \dots, V_m$ each of size~$d$.
For each edge $e= \{i_1, \dots, i_r\}$ of~$S$, let $E_e : = \{ v_{1} v_2 \dots v_r \colon v_j \in V_{i_j}$ for $j \in [r] \}$.
Let $H$ be the $r$-graph with vertex set~$V$ and edge set $\bigcup_{i \in [m]} \binom{V_i}{r} \cup \bigcup_{ e \in S}  E_e$. It is easy to see that 
\begin{align*}
\Delta_{r-1} (H) = d \quad \text{and} \quad \alpha(H) = (r-1) \alpha (S)  \le a_2 (r-1) (m \ln m)^{\frac1{r-1}}. 
\end{align*}
\end{construction}

Construction~\ref{con:KMV1} will be used to prove the upper bound of Theorem~\ref{thm:main}. 
The lower bound of~Theorem~\ref{thm:main} follows from the following theorem, which will be proved at the end of the section.

\begin{theorem} \label{thm:main2}
For all $r\ge 3$, there exist $c_{0}, \delta_0>0$ such that for every $0< \delta \le \delta_0$, the following holds for sufficiently large $n$. 
Every $r$-graph on $n$ vertices with $\Delta_{r-1}(H) \le \delta n $ satisfies
$ \alpha (H) \ge  c_{0} \left( \frac{1}{\delta} \ln \frac{1}{\delta} \right)^{1/(r-1)}$.
\end{theorem}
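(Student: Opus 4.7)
The plan is to reduce to the theorem of Duke--Lefmann--R\"odl (DLR) on the independence number of linear $r$-graphs via a random sparsification. DLR states: any $r$-graph $L$ on $N$ vertices in which every two edges meet in at most $r-2$ vertices, with maximum (or average) degree $D\ge D_0(r)$, satisfies $\alpha(L)\ge c_r N(\log D)^{1/(r-1)}/D^{1/(r-1)}$. First I would record the basic consequences of $\Delta_{r-1}(H)\le \delta n$: summing $(r-1)$-degrees through a vertex gives $\Delta_1(H)=O(\delta n^{r-1})$ and $e(H)=O(\delta n^r)$, and the number of \emph{bad pairs}---pairs $\{e,e'\}$ of distinct edges with $|e\cap e'|=r-1$---is at most $\binom{n}{r-1}\binom{\delta n}{2}=O(\delta^2 n^{r+1})$.

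Next, form $W\subseteq V(H)$ by retaining each vertex independently with probability $p:=c_3\,\delta^{-2/r}/n$ for a small absolute constant $c_3>0$. The three expectations driving the argument are
\[
\E|W|=c_3\delta^{-2/r},\qquad \E(e(H[W]))=p^r e(H)=O(c_3^{r}/\delta),\qquad \E(\#\text{bad pairs in }H[W])=O(c_3^{r+1}\delta^{-2/r}).
\]
For $c_3$ small the expected number of bad pairs is at most $\E|W|/4$, and a first-moment argument produces an outcome $W$ satisfying all three bounds up to constants. Greedily delete one vertex from each surviving bad pair, then also delete the high-degree minority (using Markov to trim all $v$ with $\deg_{H[W]}(v)>2\bar d$, which number at most $|W|/2$) to obtain $W'\subseteq W$ with $|W'|=\Theta(\delta^{-2/r})$ such that $H[W']$ is linear with maximum degree $D=O(\delta^{-(r-2)/r})$.

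Applying DLR to $H[W']$ then gives $\alpha(H)\ge \alpha(H[W'])\ge c_r|W'|(\log D)^{1/(r-1)}/D^{1/(r-1)}$. A direct computation with $|W'|\asymp \delta^{-2/r}$ and $D\asymp \delta^{-(r-2)/r}$ yields
\[
\frac{|W'|}{D^{1/(r-1)}}=\Theta(\delta^{-1/(r-1)})\qquad\text{and}\qquad (\log D)^{1/(r-1)}=\Theta\bigl((\log(1/\delta))^{1/(r-1)}\bigr),
\]
which gives $\alpha(H)\ge c_0\bigl(\tfrac{1}{\delta}\ln\tfrac{1}{\delta}\bigr)^{1/(r-1)}$ as required, provided $\delta\le \delta_0$ (so $D$ is large enough for DLR to apply) and $n$ is sufficiently large.

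The main obstacle is the choice of $p$: the exponent $2/r$ is forced by balancing the expected number of bad pairs in $W$ against $|W|$, and it is precisely this balance that propagates through DLR to produce both the optimal exponent $1/(r-1)$ and the $\log$ factor. Choosing $p$ any larger leaves too many bad pairs to prune without destroying $|W|$; choosing it any smaller loses the $\log$ factor. The secondary task of simultaneously controlling $|W|$, the bad-pair count, $e(H[W])$, and the max degree in a single realization is a routine first-moment/Chernoff exercise.
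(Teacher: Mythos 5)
Your proof is correct but takes a genuinely different route from the paper's. The paper cites Lemma~2.1 of Mubayi--Zhao \cite{MR2337241} to deterministically pass to an induced subhypergraph $H'$ on $m=\lceil 1/\delta^4\rceil$ vertices with $\Delta_{r-1}(H')\le 2\delta m$; the constant $m$ is chosen just large enough that $2\delta m < m/(\ln m)^{3(r-1)^2}$, so that Theorem~1 of Kostochka--Mubayi--Verstra\"ete \cite{MR3158630} applies \emph{directly} to $H'$ (KMV's theorem accepts any codegree bound up to $n/(\ln n)^{O(1)}$, not merely $\Delta_{r-1}\le 1$). Your argument instead randomly sparsifies to $\Theta(\delta^{-2/r})$ vertices, prunes the bad pairs and the high-degree vertices, and only then invokes the partial-Steiner-system ($\Delta_{r-1}\le 1$) case of the Duke--Lefmann--R\"odl/KMV machinery. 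Both approaches reduce the scale from $n$ down to a scale governed by $\delta$ alone, which is exactly the extension beyond $d=o(n)$ that the paper flags as the reason it cannot cite KMV or DLR out of the box. What the paper's route buys is brevity and a clean constant ($c_0=4^{-1/(r-1)}b_1$ traced through a single application of KMV); what your route buys is self-containedness, using only the $\Delta_{r-1}\le 1$ result, at the cost of the random alteration step. One small overstatement: the exponent $2/r$ in $p$ is not uniquely forced; any exponent $a$ with $1/(r-1)<a\le 2/r$ gives bad pairs of lower order than $|W|$ while keeping $\ln D=\Theta(\ln(1/\delta))$, and $2/r$ is merely the natural largest choice. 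Also be sure you are citing the version of DLR that requires only $\Delta_{r-1}\le 1$ (every two edges meet in at most $r-2$ vertices) rather than full linearity (every two edges meet in at most one vertex), since your pruning only removes pairs sharing exactly $r-1$ vertices; the former version is what the paper's footnote refers to, and your reduction matches it.
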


\smallskip
\begin{proof}[Proof of Theorem~\ref{thm:main}]
Fix $r\ge 3$. Without loss of generality, we assume that $t$ is sufficiently large.
We first prove the upper bound with $c_2 = (r-1)^r a_2^{r-1}$, where $a_2$ is from Construction~\ref{con:KMV1}. 
Our goal is to construct $r$-graphs $H$ on $n$ vertices (for infinitely many $n$) with $\alpha(H)< t$ and $\Delta_{r-1}(H) \le c_2 n\ln t / t^{r-1}$. 
To achieve this, we apply Construction~\ref{con:KMV1} with $m =  \lceil t^{r-1}/ (c_2 \ln t) \rceil$ and $d= n/m \le c_2 n\ln t / t^{r-1}$ obtaining an $r$-graph $H$ on $n$ vertices with $\Dr(H)= d$ and $\alpha(H)\le a_2 (r-1) (m \ln m)^{1/(r-1)}$. 
Since $t$ is sufficiently large, it follows that $\ln \left\lceil\frac{t^{r-1}}{c_2 \ln t}\right\rceil < \ln t^{r-1} - 1$ and
\[
m \ln m = \left\lceil \frac{t^{r-1}}{c_2 \ln t}\right\rceil \ln \left\lceil\frac{t^{r-1}}{c_2 \ln t}\right\rceil < \left( \frac{t^{r-1}}{c_2 \ln t} + 1 \right) \left( \ln t^{r-1} - 1 \right) <  \frac{(r-1) t^{r-1}}{c_2}.
\]
Consequently $\alpha(H)< a_2 (r-1) ( \frac{(r-1) t^{r-1}}{c_2} )^{1/(r-1)} = t$ by the choice of $c_2$.

\smallskip
We now prove the lower bound. Suppose $c_{0}, \delta_0$ are as in Theorem~\ref{thm:main2}.
Let $c_1 = (r-1) c_0^{r-1}/2$ and $\delta = c_1 \ln t / t^{r-1}$. Since $t$ is large, we have $\delta \le \delta_0$.
Let $n$ be sufficiently large.
We need to show that every $r$-graph $H$ on $n$ vertices with $\alpha(H)< t$ satisfies $\Dr(H) \ge \delta n$.
Indeed, by Theorem~\ref{thm:main2}, any $r$-graph $H$ on $n$ vertices with $\Dr(H) = d\le \delta n$ satisfies
\begin{align*}
	\alpha(H) & 
	\ge c_0 \left( \frac{1}{\delta} \ln \frac{1}\delta \right)^{1/(r-1)} 
	 > c_0 \left( \frac{ t^{r-1}}{2c_1 \ln t} \ln t^{r-1} \right)^{1/(r-1)}
	= t
\end{align*}
because $t$ is large and and $c_1 = (r-1) c_0^{r-1}/2$.
\end{proof}

\smallskip

The rest of the section is devoted to the proof of Theorem~\ref{thm:main2}. We need \cite[Theorem 1]{MR3158630} of Kostochka, Mubayi, Verstra\"ete and \cite[Lemma~2.1]{MR2337241} of Mubayi and Zhao.\footnote{Alternatively we could apply \cite[Theorem 3]{MR1370956} of Duke, Lefmann, and R\"odl -- we choose \cite[Theorem 1]{MR3158630} because it provides a better constant.}

\begin{theorem}\cite{MR3158630} \label{thm:KMV}
For all $r \ge 3$, there exists $b_1 >0$ such that every $r$-graph with $\Delta_{r-1}(H) \le d$ for some $0< d < n/ (\ln n)^{3 (r-1)^2}$ satisfies
$ \alpha (H) \ge b_1 \left( \frac{n}{d} \ln \frac{n}{d} \right)^{1/(r-1)}$.
\end{theorem}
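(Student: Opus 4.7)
The plan is to prove this via the semi-random \emph{nibble} method of Ajtai--Komlós--Pintz--Spencer--Szemerédi. As a warm-up, a one-shot deletion argument already yields the weaker bound $\alpha(H)\ge c(n/d)^{1/(r-1)}$: include each vertex independently with probability $p=c'(n^{r-2}d)^{-1/(r-1)}$ to form $U$, note that $\E[|U|]=pn$ while $\E[e(H[U])]\le p^r e(H)\le p^r\binom{n}{r-1}d/r$ (since $r\,e(H)=\sum_S\deg_H(S)\le\binom{n}{r-1}d$), and delete one vertex per edge of $H[U]$ to obtain an independent set of expected size $\Omega(pn)=\Omega((n/d)^{1/(r-1)})$. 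The extra $(\ln(n/d))^{1/(r-1)}$ factor has to come from iterating.

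To iterate, I would run a multi-round nibble. Initialise $V_0=V(H)$ and $I=\emptyset$. In round $i$, sample $U_i\subseteq V_i$ by including each remaining vertex independently with probability $p_i$, apply the one-shot deletion inside $H[U_i]$ to produce a small independent set $J_i\subseteq U_i$, add $J_i$ to $I$, and set $V_{i+1}:=V_i\setminus(J_i\cup N_H(J_i;I))$, where $N_H(J_i;I)$ is the set of vertices that, together with some $(r-1)$-subset of $I$, would complete an edge of $H$. Tune the $p_i$ so that $|V_i|$ shrinks by a predictable constant factor per round while $|J_i|$ is of order $(|V_i|/d)^{1/(r-1)}$; running for $T=\Theta((\ln(n/d))^{1/(r-1)})$ rounds, the accumulated $|I|=\sum_i|J_i|$ is a geometric-type series of the right order $\Theta((n\ln(n/d)/d)^{1/(r-1)})$.

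The analytic heart is to track $|V_i|$ and $\Dr(H_i)$ for the residual hypergraph $H_i=H[V_i]$ throughout the process, showing both quantities stay close to their intended trajectories. I would apply concentration (Azuma or Talagrand) per round, the codegree hypothesis entering through the second-moment estimates, and then union-bound over all $\binom{n}{r-1}\cdot T$ ``bad'' codegree events. The hypothesis $d<n/(\ln n)^{3(r-1)^2}$ is exactly what provides enough slack: it ensures the union bound is $o(1)$ and that $p_i$ remains in the useful range of the one-shot deletion in each of the $T$ rounds, the exponent $3(r-1)^2$ absorbing roughly $(r-1)$ logarithmic losses per round. The main obstacle is precisely this uniform control of codegrees: a single vertex added to $I$ can locally inflate the ``forbidden'' codegree at many $(r-1)$-sets, so the concentration/union-bound bookkeeping across all rounds, and in particular showing that $\Dr(H_i)$ never exceeds its target trajectory by more than a constant factor, is the most delicate step.
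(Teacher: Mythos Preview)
The paper does not prove this theorem: it is quoted verbatim from Kostochka--Mubayi--Verstra\"ete~\cite{MR3158630} and used as a black box in the proof of Theorem~\ref{thm:main2}. So there is no ``paper's own proof'' to compare against; the only relevant comparison is with the original argument in~\cite{MR3158630}.

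Your sketch is in the right spirit---the result is indeed proved in~\cite{MR3158630} by a semi-random (nibble) argument descended from Ajtai--Koml\'os--Pintz--Spencer--Szemer\'edi---but as written it would not go through. Two concrete issues. First, your deletion step is not enough to keep $I$ independent: when you add $J_i$ you must also forbid edges lying entirely inside $I\cup J_i$, not merely those with $r-1$ vertices in the old $I$ and one new vertex; the actual nibble handles this by controlling all partial degrees simultaneously (vertex degrees, pair degrees, \dots, $(r-1)$-degrees) in the residual hypergraph, not just $\Delta_{r-1}$. Second, the quantity that drives the iteration is the \emph{average} (or typical) degree of the surviving hypergraph, and the gain of $\ln(n/d)$ comes from the degrees in $H_i$ genuinely \emph{decreasing} round by round, not from running $\Theta((\ln(n/d))^{1/(r-1)})$ rounds at essentially constant degree as your summation suggests. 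The hypothesis $d<n/(\ln n)^{3(r-1)^2}$ is used to guarantee enough rounds before the residual hypergraph becomes too small for concentration, but the bookkeeping of how the exponent $3(r-1)^2$ arises is tied to this degree-decay analysis rather than to a union bound over codegree events as you describe.
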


\begin{lemma}\cite{MR2337241} \label{lma:MubayiZhao}
Let $r \ge 2$ and $\eps >0$.
Let $m$ be the positive integer such that $m \ge 2 ( r - 1 ) / \eps$ and $\binom{m}{r-1} e^{ - \eps^2 (m-r+1)/12} \le 1/2$.
Every $r$-graph $H$ on $n \ge m$ vertices contains an induced sub-$r$-graph~$H'$ on~$m$ vertices with $\Delta_{r-1}(H') / m \le \Delta_{r-1} (H)/n + \eps$.
\end{lemma}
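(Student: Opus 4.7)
The plan is a short probabilistic argument: sample a uniform random $m$-subset $M\subseteq V(H)$, let $H':=H[M]$, and show that with positive probability $H'$ has the required degree bound. For a fixed $(r-1)$-set $S\subseteq V(H)$ with $d:=\deg_H(S)\le \Delta_{r-1}(H)$, write $N(S)=\{v\in V(H)\setminus S: S\cup\{v\}\in E(H)\}$, so $|N(S)|=d$. Conditional on the event $S\subseteq M$, the set $M\setminus S$ is a uniformly random $(m-r+1)$-subset of $V(H)\setminus S$, and $\deg_{H'}(S)=|N(S)\cap(M\setminus S)|$ is a hypergeometric random variable with population $n-r+1$, mark count $d$, and sample size $m-r+1$. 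In particular
\[
\mathbb{E}\!\left[\deg_{H'}(S)\,\middle|\, S\subseteq M\right]=\frac{d(m-r+1)}{n-r+1}\le \frac{dm}{n}\le \frac{\Delta_{r-1}(H)}{n}\cdot m,
\]
where the first inequality uses $m\le n$.

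Next I would apply a Chernoff-type tail bound for the hypergeometric distribution to obtain, for every $(r-1)$-set $S\subseteq V(H)$,
\[
\Pr\!\left[\deg_{H'}(S)>\frac{\Delta_{r-1}(H)}{n}m+\varepsilon m\,\middle|\, S\subseteq M\right]\le \exp\!\left(-\frac{\varepsilon^2(m-r+1)}{12}\right).
\]
Since the conditional mean is at most $\Delta_{r-1}(H)m/n$, exceeding $\Delta_{r-1}(H)m/n+\varepsilon m$ forces a deviation of at least $\varepsilon m$ above the mean of a hypergeometric with sample size $m-r+1$, and the bound follows from a standard Chernoff/Hoeffding inequality in the appropriate form. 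Here the hypothesis $m\ge 2(r-1)/\varepsilon$ serves to absorb the small discrepancy between $d(m-r+1)/(n-r+1)$ and $dm/n$, and to keep the deviation parameter in the range where multiplicative Chernoff is effective.

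Call an $(r-1)$-subset $S\subseteq M$ \emph{bad} if $\deg_{H'}(S)>\Delta_{r-1}(H)m/n+\varepsilon m$. Summing the previous bound over all $(r-1)$-sets $S\subseteq V(H)$ and using $\sum_S \Pr[S\subseteq M]=\binom{m}{r-1}$ gives, by linearity of expectation,
\[
\mathbb{E}\!\left[\,|\{S\in\tbinom{M}{r-1}: S\text{ bad}\}|\,\right]\le \binom{m}{r-1}\exp\!\left(-\frac{\varepsilon^2(m-r+1)}{12}\right)\le \tfrac12
\]
by the hypothesis on $m$. Hence there exists a choice of $M$ for which no $(r-1)$-subset of $M$ is bad; the corresponding $H'=H[M]$ then satisfies $\Delta_{r-1}(H')\le \Delta_{r-1}(H)m/n+\varepsilon m$, i.e.\ $\Delta_{r-1}(H')/m\le \Delta_{r-1}(H)/n+\varepsilon$, as required.

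The only nontrivial point is step two: selecting the right Chernoff bound for the hypergeometric distribution and verifying that the condition $m\ge 2(r-1)/\varepsilon$ makes the target deviation $\varepsilon m$ compatible with the constant $1/12$ in the exponent. Everything else is bookkeeping (linearity of expectation and a union bound over the $\binom{m}{r-1}$ candidate $(r-1)$-sets inside~$M$).
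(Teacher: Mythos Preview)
The paper does not prove this lemma; it is quoted directly from Mubayi and Zhao~\cite{MR2337241} (their Lemma~2.1). Your probabilistic argument---sample a uniformly random $m$-set $M$, bound the conditional hypergeometric tail for each $(r-1)$-set, then take a union bound over the $\binom{m}{r-1}$ such sets inside $M$ via the identity $\sum_{S}\Pr[S\subseteq M]=\binom{m}{r-1}$---is exactly the standard proof and matches the original.

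The step you flag as nontrivial is in fact routine. Hoeffding's inequality for the hypergeometric distribution gives $\Pr[X\ge\mu+t]\le\exp(-2t^2/k)$ for sample size $k=m-r+1$; since you have already checked that the conditional mean satisfies $\mu=d(m-r+1)/(n-r+1)\le\Delta_{r-1}(H)\,m/n$, taking $t=\varepsilon m$ yields
\[
\Pr\bigl[\deg_{H'}(S)>\tfrac{\Delta_{r-1}(H)}{n}m+\varepsilon m \,\bigm|\, S\subseteq M\bigr]\le\exp\!\left(-\frac{2\varepsilon^2 m^2}{m-r+1}\right)\le\exp\!\bigl(-2\varepsilon^2(m-r+1)\bigr),
\]
comfortably stronger than the stated $\exp(-\varepsilon^2(m-r+1)/12)$. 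In particular the hypothesis $m\ge 2(r-1)/\varepsilon$ is not even needed for this additive form of the tail bound; its presence in the lemma reflects the specific Chernoff variant used in~\cite{MR2337241}.
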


\begin{proof}[Proof of Theorem~\ref{thm:main2}]
Fix $r \ge 3$. Let $0<\delta_0<1/4$ such that
\begin{align} \label{eq:delta}
24 (r-1) \ln \left( \left\lceil \frac1{\delta^4} \right\rceil \right) \le \frac1{\delta^2} \quad \text{and} \quad \frac{1}{\delta^4}\le \exp \left( \left(\frac1{2\delta}\right)^{\frac1{3(r-1)^2}} \right) - 1
\end{align}
for all $0< \delta\le \delta_0$.
Let $m= \lceil 1/\delta^4 \rceil$. We claim that $m$ satisfies the assumption of Lemma~\ref{lma:MubayiZhao} when $\eps= \delta$. Indeed,
it follows from the first inequality of \eqref{eq:delta} that 
\begin{align*} 
m  \ge \frac{24 (r-1) \ln m}{\delta^2} > \frac{2(r-1)}{\delta}, 
\end{align*}
which further implies that
\begin{align*}
	\binom{m}{r-1} e^{ - \delta^2 (m-r+1)/12} & \le \frac12 m^{r-1} e^{- \frac{m \delta^2}{24 } } \le \frac12.
\end{align*}

Let $c_0 =  4^{-1/(r-1)} b_1$, where $b_1$ is defined in Theorem~\ref{thm:KMV}.
Suppose $H$ is an $r$-graph on $n \ge m$ vertices with $\Delta_{r-1}(H) \le \delta n$.
By Lemma~\ref{lma:MubayiZhao}, there exists an induced subhypergarph $H'$ on $m$ vertices such that  
\begin{align*}
\Delta_{r-1}(H')  \le 2 \delta m  <  \frac{m}{ (\ln m)^{3 (r-1)^2}},
\end{align*}
which follows from the second inequality of \eqref{eq:delta} and $m= \lceil 1/\delta^4 \rceil$.
We now apply Theorem~\ref{thm:KMV} to $H'$ with $d= 2\delta m$ and obtain that 
\begin{align*}
 \alpha(H) & \ge \alpha (H') 
 \ge b_1\left(  \frac{1}{2 \delta } \ln \frac{1}{2 \delta }  \right)^{1/(r-1)} 
	\ge b_1\left(  \frac{1}{4 \delta } \ln \frac{1}{ \delta }  \right)^{1/(r-1)} 
	= c_0 \left(\frac{1}{\delta} \ln \frac{1}{\delta} \right)^{1/(r-1)}
\end{align*}
by the choice of $c_0$ and the assumption that $\delta \le 1/4$.
\end{proof}

\section{Concluding remarks}

Theorem \ref{thm:main} shows that $c_1 \ln t/ t^{r-1} \le \tau_{r-1} (t,r)  \le c_2 \ln t / t^{r-1}$.
Our proofs of Theorems~\ref{thm:main} and \ref{thm:main2} together give that $c_1 =   (r-1)  b_1^{r-1}/8$, where $b_1$ comes from Theorem~\ref{thm:KMV}.
A slightly more careful calculation allows us to take $c_1 = ( 1+o_t(1) ) (r-1)  b_1^{r-1}$ (where $o_t(1)\to 0$ as $t \rightarrow \infty$). 
The equation (7) in \cite{MR3158630} shows that $b_1^{r-1} = (1 + o_r(1)) (r-3)!/3$ and thus 
\[
c_1 = (1+ o_r(1))\frac{r-1}3 (r-3)!.
\]
On the other hand, our proof of Theorem~\ref{thm:main} gives $c_2 = (r-1)^r a_2^{r-1}$, where $a_2$ comes from Construction~\ref{con:KMV1}.
Unfortunately, we do not know the smallest $a_2$ such that there is a partial Steiner $(m, r, r-1)$-system $S$ with $\alpha (S) \le a_2 (m \ln m)^{1/(r-1)}$ for every $m$. However, the random construction in \cite[Section 3.2]{MR3158630} yields a constant that asymptotically equals $b_1$ but requires $\ln \Delta_{r-1}(H) = o(\ln n)$. Nevertheless, we can use the blowup of this construction and add some additional edges when $r\ge 4$ to derive that\footnote{
For example, when $r$ is even, we add all the $r$-sets that lie inside one vertex class and the $r$-sets that intersect $r/2$ vertex classes each with exactly two vertices.} 

\begin{align*}
c_2 = 
\begin{cases}
	( 1+o_t(1) ) r \cdot r!  & \text{if $r$ is even},\\
	( 1+o_t(1) ) 2^{r-1} r \cdot r! & \text{if $r$ is odd.}
\end{cases}
\end{align*}
When $r$ is even, above refined values of $c_1$ and $c_2$ differ by a factor of~$3r^3$ asymptotically.
We tend to believe that  
$\tau_{r-1}(t,r) \sim r \cdot r! \ln t / t^{r-1}$ when $t\gg r\gg 1$.

Given any $r$-graph $H$ on $n$ vertices, $\Delta_{\ell}(H)/ \binom{n- \ell}{r- \ell}$ is an increasing function of $\ell$. As a result, $\tau_{\ell} (t,r)$ is an increasing function of $\ell$.  When $t\to \infty$, 
we have $\tau_1 (t,r) = 1 - \pi(K_t^r) = \Theta ( 1/t^{r-1} )$ from~\eqref{eq:1} and $\tau_{r-1} (t,r) = \Theta(\ln t / t^{r-1})$
from Theorem~\ref{thm:main}. 
Putting these together, we have
\[
\Theta \left(\frac1{t^{r-1}} \right) = \tau_{1} (t,r)  \le \tau_2 (t,r) \le \dots \le \tau_{r-1} (t,r) = \Theta \left( \frac{\ln t}{t^{r-1}} \right).
\]
It is interesting to know if $\tau_{\ell} (t,r) = \Theta (\ln t / t^{r-1})$ for all $\ell\ge 2$. 

\section*{Acknowledgment}

The authors would like to thank Sidorenko and two referees for their valuable comments.



\end{document}